\newtheorem{theorem}{Theorem}[section]
\newtheorem{lemma}{Lemma}[section]
\newtheorem{example}{Example}[section]
\newsavebox{\@brx}
\newcommand{\llangle}[1][]{\savebox{\@brx}{\(\m@th{#1\langle}\)}%
	\mathopen{\copy\@brx\kern-0.5\wd\@brx\usebox{\@brx}}}
\newcommand{\rrangle}[1][]{\savebox{\@brx}{\(\m@th{#1\rangle}\)}%
	\mathclose{\copy\@brx\kern-0.5\wd\@brx\usebox{\@brx}}}
\begin{document}
	\title{ Mizoguchi-Takahashi local contractions to Feng-Liu contractions}
	\author{
		Pallab Maiti\footnotemark[2], Asrifa Sultana\footnotemark[1] \footnotemark[2]}
	\date{ }
	\maketitle
	\def\thefootnote{\fnsymbol{footnote}}
	
	\footnotetext[1]{ Corresponding author. e-mail- {\tt asrifa@iitbhilai.ac.in}}
	\noindent
	\footnotetext[2]{Department of Mathematics, Indian Institute of Technology Bhilai, Raipur - 492015, India.
	}
	
	
		
	\begin{abstract}
	In this article, we establish that any uniformly local Mizoguchi-Takahashi contraction is actually a set-valued contraction due to Feng and Liu on a metrically convex complete metric space. Through an example, we demonstrate that this result need not hold on any arbitrary metric space. Furthermore, when the metric space is compact, we derive that any Mizoguchi-Takahashi local contraction and Nadler local contraction are equivalent. Moreover, a result related to invariant best approximation is established.
	\end{abstract}
	{\bf Keywords:}
	Fixed points; Set-valued map; metrically convex metric space; uniformly local contractions.\\
	{\bf Mathematics Subject Classification:}
	47H10
	
\section{Introduction}\label{sec1}

Let $(Z,d)$ be a metric space and $CL(Z)$ denotes the collection of non-void closed subsets of $Z$. The set $CB(Z)$ contains all the non-void bounded closed subsets of $Z$. For any $P,Q\in CB(Z)$, 
\begin{equation}
	H(P,Q)=\max\bigg\{\sup_{u\in Q}D(u,P), \sup_{v\in P}D(v,Q)\bigg\}
\end{equation}
is called Hausdorff metric on $CB(Z)$, where $D(u,P)=\inf\{d(u,v):v\in P\}$. For a mapping $F:Z\to CL(Z)$, an element $z^*\in Z$ is known as a fixed point for the map $F$ if $z^*\in Fz^*$. The occurrence of fixed points for set-valued mappings was first deduced by Nadler \cite{nadler} in the year of 1969. Later, Mizoguchi and Takahashi \cite{mizo} improved the Nadler's \cite{nadler} theorem for a set-valued generalized contraction in the year 1989. The authors \cite{mizo} established the below stated theorem.
\begin{theorem}\cite{mizo}\label{mizo_thm}
	Suppose that $(Z,d)$ is complete and a set-valued map $F:Z\to CB(Z)$ in order that for every $y,z\in Z$,
	\begin{equation}\label{mizo_eq}
		H(Fy,Fz)\leq k(d(y,z))d(y,z),
	\end{equation}
	where $k\in W=\{\theta:[0,\infty)\to [0,1)| \limsup_{t\to s^+}\theta(t)<1,~\forall s\in [0,\infty)\}$. Then there is an element $z^*\in Fz^*$. 
\end{theorem}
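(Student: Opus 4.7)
The plan is to build a Picard-style iterative sequence $\{z_n\}$ with $z_{n+1}\in Fz_n$, show it is Cauchy by exploiting that $k\in W$, and then verify that its limit is a fixed point. The main difficulty is that the contraction factor $k(d(y,z))$ is state-dependent, so a direct geometric-decay argument as in Banach's theorem is unavailable; the $\limsup$ hypothesis will need to be invoked more than once along the orbit.

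First I would fix $z_0\in Z$ and pick any $z_1\in Fz_0$; if $z_0=z_1$ we are done, so suppose otherwise. To tame the iterates, I introduce the auxiliary function $\mu(t)=\tfrac{1+k(t)}{2}$, which satisfies $k(t)<\mu(t)<1$ for every $t\ge 0$ and still obeys $\limsup_{t\to s^+}\mu(t)<1$. Since $z_1\in Fz_0$ gives $D(z_1,Fz_1)\le H(Fz_0,Fz_1)\le k(d(z_0,z_1))\,d(z_0,z_1)<\mu(d(z_0,z_1))\,d(z_0,z_1)$, closedness of $Fz_1$ together with the definition of infimum yields some $z_2\in Fz_1$ with $d(z_1,z_2)\le\mu(d(z_0,z_1))\,d(z_0,z_1)$. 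Iterating produces a sequence with $a_n:=d(z_n,z_{n+1})$ satisfying $a_n\le\mu(a_{n-1})\,a_{n-1}$.

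Next I would show $a_n\to 0$ and then $\sum a_n<\infty$. Since $\mu<1$ the sequence $\{a_n\}$ is strictly decreasing, hence converges to some $a\ge 0$; if $a>0$ then $a_n\to a^+$, so the hypothesis $\limsup_{t\to a^+}\mu(t)<1$ forces $\mu(a_n)\le c<1$ eventually, contradicting $a_n\to a>0$. With $a=0$ in hand, the same $\limsup$ condition taken at $s=0$ again gives $\mu(a_n)\le c<1$ for large $n$, whence $a_{n+\ell}\le c^\ell a_n$ and therefore $\sum a_n<\infty$; the triangle inequality then makes $\{z_n\}$ Cauchy.

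Finally, completeness provides $z_n\to z^*\in Z$, and the estimate $D(z^*,Fz^*)\le d(z^*,z_{n+1})+H(Fz_n,Fz^*)\le d(z^*,z_{n+1})+k(d(z_n,z^*))\,d(z_n,z^*)\to 0$, combined with closedness of $Fz^*$, forces $z^*\in Fz^*$. I expect the Cauchy step to be the chief obstacle: the $\limsup$ hypothesis must be applied first to push $a_n$ down to $0$ and a second time near $0$ to extract summability, precisely because the contraction factor cannot be replaced by a single constant uniformly along the orbit.
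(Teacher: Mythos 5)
The paper does not prove this statement at all: Theorem~\ref{mizo_thm} is quoted verbatim from Mizoguchi--Takahashi as background, so there is no in-paper argument to compare against. Judged on its own, your proposal is a correct proof, and it is essentially the streamlined argument that Suzuki gave for this theorem (replacing $k$ by $\mu=\tfrac{1+k}{2}$ so that the infimum $D(z_n,Fz_n)\le k(a_{n-1})a_{n-1}<\mu(a_{n-1})a_{n-1}$ is strictly undercut and a next iterate can be selected), rather than the longer original argument of Mizoguchi and Takahashi. All the key steps check out: $\mu$ still lies in $W$ since $\limsup_{t\to s^+}\mu(t)=\tfrac12\bigl(1+\limsup_{t\to s^+}k(t)\bigr)<1$; the two separate invocations of the $\limsup$ condition (first at the putative positive limit $a$ to rule it out, then at $s=0$ to obtain an eventual uniform ratio $c<1$ and hence summability) are exactly what the definition of $W$ supplies, the second being available because $0\in[0,\infty)$ is included in the quantifier; and the final estimate $D(z^*,Fz^*)\le d(z^*,z_{n+1})+H(Fz_n,Fz^*)$ together with closedness of $Fz^*$ closes the argument. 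Two cosmetic remarks: closedness of $Fz_1$ is not actually needed to select $z_2$ (only the definition of infimum against a strict upper bound), and you should note that the selection step presupposes $a_{n-1}>0$ at every stage, the degenerate case $z_n=z_{n+1}$ yielding a fixed point immediately; neither affects correctness.
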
 
In fact, Suzuki \cite{suzuki} provided an example to deduce that every Mizoguchi -Takahashi contraction \cite{mizo} is not necessarily a Nadler contraction \cite{nadler} on arbitrary metric spaces in the year of 2007. However, Eldred-Anuradha-Veeramani \cite{eldred} observed that any Mizoguchi-Takahashi \cite{mizo} contraction follows a Nadler \cite{nadler} contraction, whenever the metric space $(Z,d)$ is metrically convex complete \cite{eldred} in the year 2009. There are several extensions of the Theorem \ref{mizo_thm} due to Mizoguchi and Takahashi \cite{mizo} in the literature, which can be found in \cite{berinde,mam}.

In the year 1961, Edelstein \cite{edel} generalized the famous Banach contraction principle for uniformly local contraction \cite{edel}. Subsequently, Nadler \cite{nadler} improved the Edelstein's \cite{edel} result for set-valued uniformly local contraction \cite{nadler}. Later, Sultana and Vetrivel \cite{mam} extended the Theorem \ref{mizo_thm} due to Mizoguchi and Takahashi \cite{mizo} for set-valued uniformly local contraction. The authors \cite{mam} derived the succeeding theorem.
\begin{theorem}\cite{mam}\label{mizo_local_thm}
	Suppose that $(Z,d)$ is complete $r$-chainable and a set-valued map $F:Z\to CB(Z)$ in order that for each $y,z\in Z$ having $d(y,z)<r$ (where $r>0$) fulfils,
	\begin{equation}\label{mizo_local}
		H(Fy,Fz)\leq k(d(y,z))d(y,z), 
	\end{equation}
	where $k\in W$. Then there is an element $z^*\in Fz^*$.
\end{theorem}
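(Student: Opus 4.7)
My plan is to mimic the Mizoguchi--Takahashi iterative construction behind Theorem \ref{mizo_thm}, but propagated along an $r$-chain between successive iterates, so that every invocation of (\ref{mizo_local}) involves a pair of points at distance less than $r$. Fix any $z_0 \in Z$ and any $z_1 \in Fz_0$. By $r$-chainability, choose an $r$-chain $z_0 = w_0^{(0)}, w_1^{(0)}, \ldots, w_m^{(0)} = z_1$ with $d(w_i^{(0)}, w_{i+1}^{(0)}) < r$, and introduce the auxiliary function $\beta(t) = (k(t)+1)/2$, which again lies in $W$ and satisfies $k(t) < \beta(t) < 1$.

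Inductively, for $n \geq 1$, I construct a chain $(w_i^{(n)})_{i=0}^m$ with $w_0^{(n)} := z_n$, $w_i^{(n)} \in F w_i^{(n-1)}$ for $1 \leq i \leq m$, and
$$d\bigl(w_{i-1}^{(n)}, w_i^{(n)}\bigr) \;\leq\; \beta\bigl(d(w_{i-1}^{(n-1)}, w_i^{(n-1)})\bigr)\; d\bigl(w_{i-1}^{(n-1)}, w_i^{(n-1)}\bigr).$$
The selection is feasible because $w_{i-1}^{(n)} \in F w_{i-1}^{(n-1)}$ and $d(w_{i-1}^{(n-1)}, w_i^{(n-1)}) < r$ (by the inductive hypothesis), so (\ref{mizo_local}) yields $D(w_{i-1}^{(n)}, F w_i^{(n-1)}) \leq k(\cdot)\, d(\cdot,\cdot)$ and the strict gap $\beta > k$ provides positive slack for the choice of $w_i^{(n)}$. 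Setting $z_{n+1} := w_m^{(n)}$ then gives $z_{n+1} \in F w_m^{(n-1)} = F z_n$, together with an $r$-chain from $z_n$ to $z_{n+1}$.

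Writing $a_i^{(n)} := d(w_i^{(n)}, w_{i+1}^{(n)})$, the construction yields $a_i^{(n)} \leq \beta(a_i^{(n-1)}) a_i^{(n-1)} < a_i^{(n-1)} < r$, so each row $(a_i^{(n)})_n$ is non-increasing and stays below $r$ (preserving applicability of (\ref{mizo_local})). The standard Mizoguchi--Takahashi trick---if $a_i^{(n)} \searrow \ell_i > 0$ then $\limsup_n \beta(a_i^{(n)}) \leq \limsup_{t \to \ell_i^+} \beta(t) < 1$, forcing geometric decay and a contradiction---gives $a_i^{(n)} \to 0$ and, since also $\limsup_{t \to 0^+} \beta(t) < 1$, summability $\sum_n a_i^{(n)} < \infty$ for each fixed $i$. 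As $m$ is fixed,
$$\sum_{n} d(z_n, z_{n+1}) \;\leq\; \sum_{n} \sum_{i=0}^{m-1} a_i^{(n)} \;=\; \sum_{i=0}^{m-1} \sum_{n} a_i^{(n)} \;<\; \infty,$$
so $(z_n)$ is Cauchy in the complete space $(Z,d)$ and converges to some $z^* \in Z$. For the fixed-point conclusion, eventually $d(z_n, z^*) < r$, so (\ref{mizo_local}) gives $H(Fz_n, Fz^*) \to 0$; combined with $z_{n+1} \in Fz_n$ and $z_{n+1} \to z^*$, this forces $D(z^*, Fz^*) = 0$, and closedness of $Fz^*$ yields $z^* \in Fz^*$.

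The main obstacle is the inductive construction itself: one must guarantee that the entire two-parameter array of chain edges stays strictly inside the regime where (\ref{mizo_local}) applies, i.e. $a_i^{(n)} < r$ at every level. This is precisely what the strict inequality $\beta(t) < 1$ buys---running the iteration with $\beta$ rather than $k$ is what keeps the $r$-chain property invariant under propagation. Once that is secured, the row-by-row summability is a direct transcription of Mizoguchi--Takahashi's original lemma to each of the finitely many indices $i \in \{0,1,\ldots,m-1\}$.
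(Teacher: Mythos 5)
Your proof is correct, but note that the paper itself contains no proof of this statement: Theorem \ref{mizo_local_thm} is quoted verbatim from Sultana--Vetrivel \cite{mam}, where it arises as a consequence of a fixed point theorem for Mizoguchi--Takahashi contractions on a metric space endowed with a graph (one takes the graph whose edge set consists of the pairs at distance less than $r$). Your construction is in effect the direct, graph-free version of that argument: propagating the fixed $r$-chain of length $m$ through $F$ row by row, with the auxiliary function $\beta=(k+1)/2\in W$ supplying the selection slack, and then applying the Mizoguchi--Takahashi decay lemma separately to each of the $m$ columns before summing. All the key points check out: the initial element of each new row satisfies $w_0^{(n)}=w_m^{(n-1)}\in Fw_m^{(n-2)}=Fw_0^{(n-1)}$, so the first selection in each row is legitimate; the monotonicity $a_i^{(n)}\leq a_i^{(0)}<r$ keeps every application of (\ref{mizo_local}) inside the local regime; and column-wise summability plus the triangle inequality along each chain gives $\sum_n d(z_n,z_{n+1})<\infty$. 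Two small details would deserve a sentence in a full write-up: the degenerate selection step when $a_i^{(n-1)}=0$ (where there is no ``positive slack,'' but one simply takes $w_i^{(n)}=w_{i-1}^{(n)}$, using $Fw_{i-1}^{(n-1)}=Fw_i^{(n-1)}$ or closedness of the values), and the observation that in the case $\ell_i>0$ the sequence $a_i^{(n)}$ is eventually strictly above $\ell_i$, so that the one-sided $\limsup$ in the definition of $W$ really applies. Neither is a gap.
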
  

It is worth to note that every Mizoguchi-Takahashi local contraction \cite{mam} is not a Nadler \cite{nadler} local contraction, see \cite[Example 3.1]{mam_equivalence}. But, Sultana and Qin \cite{mam_equivalence} demonstrated that these two contractions are equivalent, whenever the metric space $(Z,d)$ is metrically convex complete in the year 2019.

On the other hand, Feng and Liu \cite{feng} improved Nadler's \cite{nadler} theorem for the mappings $F$ from $Z$ into $CL(Z)$ in the year of 2006, which is stated below.
\begin{theorem}\cite{feng}\label{feng_thm}
	Suppose that $(Z,d)$ is a complete metric space and a set-valued map $F:Z\to CL(Z)$ in order that for each $y\in Z$, there is $z\in I_{c\in (0,1)}^y=\{x\in Fy:cd(x,y)\leq D(y,Fy)\}$,
	\begin{equation}\label{feng_eq}
		D(z,Fz)\leq \alpha d(y,z)~\textrm{where}~ 0\leq \alpha\leq c<1.
	\end{equation}
	Then there an element $z^*\in Fz^*$ if $h:Z\to \mathbb{R}$ by $h(z)=D(z,Fz)$ is lower semicontinuous.
\end{theorem}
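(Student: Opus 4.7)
The plan is to build an iterative sequence by repeatedly invoking the hypothesis and then show it is Cauchy, so that its limit provides the desired fixed point via the lower semicontinuity assumption.

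Start from an arbitrary $y_0\in Z$. By hypothesis, there exists $y_1\in I_c^{y_0}=\{x\in Fy_0:cd(x,y_0)\leq D(y_0,Fy_0)\}$ with $D(y_1,Fy_1)\leq \alpha d(y_0,y_1)$. Applying the hypothesis to $y_1$, we obtain $y_2\in I_c^{y_1}$ with $D(y_2,Fy_2)\leq\alpha d(y_1,y_2)$, and proceeding inductively we produce a sequence $\{y_n\}\subset Z$ such that for every $n\geq 0$,
\begin{equation*}
y_{n+1}\in Fy_n,\qquad cd(y_n,y_{n+1})\leq D(y_n,Fy_n),\qquad D(y_{n+1},Fy_{n+1})\leq \alpha d(y_n,y_{n+1}).
\end{equation*}

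Chaining the second and third displayed relations across successive indices gives $cd(y_n,y_{n+1})\leq D(y_n,Fy_n)\leq \alpha d(y_{n-1},y_n)$, hence $d(y_n,y_{n+1})\leq (\alpha/c)\,d(y_{n-1},y_n)$ for all $n\geq 1$. Writing $\lambda=\alpha/c<1$ (the strict inequality is the point where one uses $\alpha<c$; if the statement is read with $\alpha\leq c$, one needs $\alpha<c$ here or else an extra argument), iteration yields $d(y_n,y_{n+1})\leq \lambda^n d(y_0,y_1)$ and a standard telescoping estimate shows $\{y_n\}$ is Cauchy. Completeness of $(Z,d)$ furnishes some $z^*\in Z$ with $y_n\to z^*$.

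To finish, note that $D(y_n,Fy_n)\leq \alpha d(y_{n-1},y_n)\to 0$, so $h(y_n)\to 0$. The lower semicontinuity of $h(z)=D(z,Fz)$ then forces
\begin{equation*}
0\leq h(z^*)\leq \liminf_{n\to\infty}h(y_n)=0,
\end{equation*}
so $D(z^*,Fz^*)=0$, and closedness of $Fz^*\in CL(Z)$ yields $z^*\in Fz^*$.

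The main obstacle is the comparison $d(y_n,y_{n+1})\leq (\alpha/c)\,d(y_{n-1},y_n)$: everything downstream is routine, but this step is exactly where the two constants $\alpha$ and $c$ must be linked, and it is also the only place where the particular choice of $y_{n+1}$ in the ``proximal'' set $I_c^{y_n}$ (rather than an arbitrary element of $Fy_n$) is used. One should be careful that the construction of $\{y_n\}$ uses the axiom of choice only countably many times, and that no continuity of $F$ itself is invoked — the lower semicontinuity of the scalar functional $h$ is the sole hypothesis replacing continuity of $F$ in Nadler's original argument.
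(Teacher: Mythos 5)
Your argument is correct and is precisely the standard iterative proof from Feng and Liu's original paper; the present manuscript only quotes this theorem from \cite{feng} without reproving it, so there is no in-paper proof to diverge from. You are also right to flag the constant comparison: the strict inequality $\alpha<c$ is genuinely needed for $\lambda=\alpha/c<1$, and the ``$\alpha\leq c$'' in the statement is evidently a typo, since everywhere else the paper uses $0\leq\alpha<c<1$.
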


In this manuscript, inspired by Eldred et. al \cite{eldred} and Sultana-Qin \cite{mam_equivalence}, we establish that any set-valued contraction due to Feng-Liu \cite{feng} follows from the Mizoguchi-Takahashi local contraction \cite{mam_equivalence} on a metrically convex metric space. On any arbitrary metric spaces, this result need not be true, an example is provided for this purpose. The fixed points for the set-valued mappings fulfil the equation $(\ref{mizo_local})$ are also studied on metrically convex spaces. Furthermore, the equivalence of Mizoguchi-Takahashi local contraction \cite{mam_equivalence} and Nadler local contraction \cite{nadler} is established in a compact metric space. An invariant approximation result is also discussed through our main theorem.
\section{Notations and Definitions}
This segment contains with some mathematical definitions and notations, which are needful up to the end of this paper. For the metric space $(Z,d)$, let $u,v$ be any two points in $Z$. Then a point $w$ in $Z$, where $u\neq w\neq v$, is called metrically \cite{eldred} between $u$ and $v$ if $d(u,v)=d(u,w)+d(w,v)$. The space $(Z,d)$ is called metrically convex \cite{eldred} if for every pair of elements $u$ and $v$ in $Z$, there is an element $w\in Z$ in order that $w$ is metrically between $u$ and $v$. The below stated lemma is regarding a property of metrically convex space, which is needful for establish our main theorem.

\begin{lemma}\label{metrically}\cite{eldred}
	Assume that $(Z,d)$ is metrically convex complete and $u,v\in Z$ are any arbitrary points. Then there is $[a_1,a_2]\subset \mathbb{R}$ and an isomorphism $\varphi:[a_1,a_2]\to Z$ in order that $\varphi(a_1)=u$, $\varphi(a_2)=v$.
\end{lemma}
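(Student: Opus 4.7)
The plan is to realize $\varphi$ as an isometric parametrization of a metric segment from $u$ to $v$---the classical Menger segment construction. Set $L := d(u,v)$ and take $a_1 := 0$, $a_2 := L$. I would first define $\varphi$ on the dyadic rationals of $[0,L]$ and then extend it by completeness of $Z$.

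The crucial ingredient is a \emph{midpoint lemma}: for every $p, q \in Z$ there exists $m \in Z$ with $d(p,m) = d(m,q) = d(p,q)/2$. Metric convexity alone only supplies \emph{some} in-between point, with no control over how it splits the distance. To upgrade this to an exact midpoint, I would iterate the convexity axiom to construct, for each $\varepsilon > 0$, an $\varepsilon$-approximate midpoint of $p$ and $q$; arranging these approximations so that the corresponding sequence is Cauchy, completeness of $Z$ delivers a limit point, and continuity of $d$ shows that the limit is a true midpoint.

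With midpoints available, define $\varphi(0) := u$, $\varphi(L) := v$, $\varphi(L/2) := $ midpoint of $u$ and $v$, and inductively $\varphi\bigl((2j+1)L/2^{n+1}\bigr) := $ midpoint of $\varphi(jL/2^n)$ and $\varphi((j+1)L/2^n)$. A straightforward induction on $n$ yields $d(\varphi(s), \varphi(t)) = |s - t|$ for all dyadic $s, t \in [0,L]$. Since the dyadic rationals are dense in $[0,L]$ and $\varphi$ is $1$-Lipschitz on them, for each $t \in [0,L]$ any sequence of dyadics $t_n \to t$ produces a Cauchy sequence $(\varphi(t_n))$ in $Z$; set $\varphi(t) := \lim \varphi(t_n)$ using completeness, and let the isometric identity pass to the limit. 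This yields the desired isometry $\varphi : [a_1, a_2] \to Z$ with $\varphi(a_1) = u$ and $\varphi(a_2) = v$. The principal obstacle is the midpoint step: converting a bare between-point into an exact midpoint requires a careful iterative argument in which both metric convexity and completeness play an essential role.
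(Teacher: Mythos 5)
The paper does not actually prove this lemma; it imports it from \cite{eldred}, and the statement is Menger's classical theorem that in a complete, metrically convex space any two points are joined by a metric segment (your reading of ``isomorphism'' as ``isometry'' is the intended one). Your proposal reconstructs the standard proof of that theorem: exact midpoints, dyadic parametrization, extension by density and completeness. The dyadic and extension steps are fine as you describe them: once $d(\varphi(jL/2^n),\varphi((j+1)L/2^n))=L/2^n$ for all $j,n$, the chain of triangle inequalities from $\varphi(0)$ to $\varphi(L)$ is forced to degenerate into equalities, giving $d(\varphi(s),\varphi(t))=|s-t|$ on dyadics, and the $1$-Lipschitz extension to $[0,L]$ is routine.

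The gap is in the midpoint lemma, and it is larger than your sketch acknowledges; two separate difficulties are being waved at. First, iterating the convexity axiom does not obviously produce even a single $\varepsilon$-approximate midpoint: each application splits a gap $[x,y]$ into two pieces of positive length, but the split can be arbitrarily lopsided, so after finitely many iterations you have no control on how close any constructed between-point is (in parameter) to $d(p,q)/2$. Already at this stage one needs completeness, via a nested chain of between-points whose consecutive distances are summable (hence Cauchy), or a supremum/transfinite argument in the style of Menger. Second, and more seriously, ``arranging these approximations so that the corresponding sequence is Cauchy'' is essentially the whole content of the theorem: two $\varepsilon$-approximate midpoints of the same pair $p,q$ need not be close to each other (picture two points joined by many pairwise far-apart near-geodesics), so an arbitrary sequence of improving approximate midpoints has no reason to converge. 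The approximations must be built coherently, each refining the configuration produced at the previous stage, which is exactly the delicate part of the Menger--Aronszajn construction (see Blumenthal's treatment of segments in convex complete spaces). So your outline follows the correct classical route, but as written it assumes the hard step rather than proving it; for the purposes of this paper the honest course is the one the authors take, namely to cite the result.
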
 

Again for the metric space $(Z,d)$, consider a self map $\Psi$ on $Z$. Then $\Psi$ is defined as uniformly local contractions \cite{edel} if for each $y,z \in Z$ having $d(y,z)<r$ fulfils $d(\Psi (y),\Psi (z))\leq \alpha d(y,z)$, where $0\leq \alpha <1$ and $r>0$. Furthermore, for $y,z\in Z$, a $r$-chain \cite{edel} between $y$ and $z$ means, there is a finite sequence $y=z_0,z_1,\cdots,z_n=z$ in $Z$, such that for every $j\in\{1,2,\cdots,n\}$, $d(z_{j-1},z_j)<r$. The space $(Z,d)$ is stated as $r$-chainable \cite{edel} whenever there is a $r$-chain between every points $y$ and $z$ in $Z$.

%
%
\section{Main Result}
Now we commence our main theorem about that any Mizoguchi-Takahashi local contraction \cite{mam}, that is, the mappings that meet the equation (\ref{mizo_local}), is actually a Feng-Liu contraction \cite{feng} on a metrically convex metric space $(Z,d)$.
\begin{theorem}\label{main}
	Suppose that $(Z,d)$ is a complete metric space having metrically convexity property and a set-valued map $F:Z\to CB(Z)$ in order that for $y,z\in Z$ having $d(y,z)<r$ (where $r>0$),
	\begin{equation}\label{MT}
		H(Fy,Fz)\leq k(d(y,z))d(y,z)
	\end{equation}
	where $k\in W$. Then $F$ becomes a Feng-Liu contraction, that is, for each $y^{'}\in Z$, there is $z^{'}\in I_c^{y^{'}}$ fulfils,
	\begin{equation*}
		D(z^{'},Fz^{'})\leq \alpha d(y^{'},z^{'})~\textrm{where}~0\leq \alpha <c<1.
	\end{equation*}
\end{theorem}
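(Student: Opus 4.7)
The plan is to produce, for each $y' \in Z$, a suitable $z' \in Fy'$ verifying the Feng--Liu condition, by combining a near-infimum choice of $z'$ (to place it in $I_c^{y'}$) with a chaining argument along an isometric parametrisation supplied by Lemma~\ref{metrically} (to estimate $D(z',Fz')$).

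First, I would extract uniform constants from the class $W$. Since $\limsup_{t\to 0^+} k(t) < 1$, there exist $\delta_0>0$ and $k_0 \in [0,1)$ such that $k(t) \le k_0$ for every $t \in (0,\delta_0)$. Fix $\alpha := k_0$ and $c := (1+k_0)/2$, so that $0 \le \alpha < c < 1$; these will serve as the Feng--Liu constants. For a fixed $y' \in Z$ set $\rho := D(y', Fy')$. If $\rho = 0$, closedness of $Fy'$ forces $y' \in Fy'$ and we take $z' := y'$, both required inequalities holding trivially. If $\rho > 0$, since $\rho/c > \rho$ I choose $z' \in Fy'$ with $d(y',z') \le \rho/c$; this places $z'$ in $I_c^{y'}$, because $c\,d(y',z') \le \rho = D(y',Fy')$.

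The core step is the inequality $D(z', Fz') \le \alpha d(y', z')$. Setting $t := d(y', z')$, I invoke Lemma~\ref{metrically} to produce an isometry $\varphi:[0,t] \to Z$ with $\varphi(0)=y'$ and $\varphi(t)=z'$. Choose $n \in \mathbb{N}$ with $t/n < \min(r, \delta_0)$ and set $w_j := \varphi(jt/n)$ for $j=0,\dots,n$, so $d(w_{j-1},w_j) = t/n < r$. The local hypothesis (\ref{MT}) yields $H(Fw_{j-1},Fw_j) \le k(t/n)(t/n) \le k_0 (t/n)$ for every $j$, and the triangle inequality for $H$ on $CB(Z)$ gives
\begin{equation*}
H(Fy', Fz') \le \sum_{j=1}^{n} H(Fw_{j-1},Fw_j) \le n\,k_0\,(t/n) = k_0\, t.
\end{equation*}
Because $z' \in Fy'$, we conclude $D(z', Fz') \le H(Fy', Fz') \le k_0 t = \alpha\, d(y', z')$, as required.

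The principal obstacle is the estimate when $d(y',z')$ exceeds the locality radius $r$: only the metrically convex hypothesis, through the isometric parametrisation of Lemma~\ref{metrically}, lets us refine the jump from $Fy'$ to $Fz'$ into arbitrarily short steps on which (\ref{MT}) applies. A secondary delicate point is the choice of constants: $c$ must be strictly below $1$ so that $I_c^{y'}$ is non-empty (via the near-infimum choice of $z'$), and $\alpha$ must be strictly below $c$, which is why I first extract $k_0<1$ from $k \in W$ and then fit $c$ into the gap $(k_0,1)$.
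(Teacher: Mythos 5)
Your proposal is correct and follows essentially the same route as the paper: choose $z'\in Fy'$ near the infimum so that $z'\in I_c^{y'}$, use Lemma~\ref{metrically} to parametrise an isometric arc from $y'$ to $z'$, subdivide it into steps shorter than the locality radius, and telescope the Hausdorff-metric triangle inequality with condition~(\ref{MT}) on each step. Your extraction of the uniform bound $k_0<1$ directly from $\limsup_{t\to 0^+}k(t)<1$ is a clean streamlining of the paper's two-case analysis on the set $M=\{m>0:\sup\{k(s):s\in[0,m]\}=1\}$, which ultimately serves the same purpose.
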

\begin{proof}
	Our main goal is to show that there is certain $\alpha$, where $0\leq \alpha<c<1$ in order that for $y\in Z$, $z\in I_c^y$ meets $D(z,Fz)\leq \alpha d(y,z)$. Suppose that
	\begin{equation}
		M=\{m>0: \sup\{k(d(y,z)):d(y,z)\in [0,m]\}=1\}.
	\end{equation}
	Then there are two possibilities that could happen. Firstly, we assume that the set $M$ is void. Then for each positive $m$, the supremum value of $k(d(y,z))$ is strictly less than $1$, where $d(y,z)\in [0,m]$.
	Since $r>0$ is fixed, we assume that 
	\begin{equation}\label{set}
		\sup\{k(d(y,z)):d(y,z)\in [0,r]\}=\alpha<1.
	\end{equation}
	Evidently, $\alpha\in [0,1)$. Let $y^{'}\in Z$ be a fixed element. Then there is an element $z^{'}\in I_c^y{^{'}}$ where $c>\alpha$, due to the fact that $F(y^{'})\subset CL(Z)$. Consequently, for chosen $y^{'}$ and $z^{'}$, an isometry $\varphi_1:[t_1,t_2]\to Z$ exists satisfies $\varphi_1(t_1)=y^{'}$ and $\varphi_1(t_2)=z^{'}$ (where $[t_1,t_2]\subset \mathbb{R}$ ) according to the Lemma \ref{metrically}.
	Now for a fixed positive real number $q<r$, we can always find $L\in \mathbb{N}$ satisfying $t_1+Lq<t_2$ and $t_2\leq t_1+(L+1)q$. Consequently,
	\begin{equation}\label{isomorphism}
		\begin{aligned}
			d(y^{'},z^{'})&=d(\varphi_1(t_1),\varphi_1(t_2))=|t_1-t_2|\nonumber\\
			&=|t_1-(t_1+q)|+|(t_1+q)-(t_1+2q)|+\cdots+|(t_1+Lq)-t_2|\nonumber\\
			&=d(y^{'},\varphi_1(t_1+q))+d(\varphi_1(t_1+q),\varphi_1(t_1+2q))+\cdots+\nonumber\\
			&\qquad d(\varphi_1(t_1+Lq),z^{'}).
		\end{aligned}
	\end{equation}
	Again, it is simple to observe that $d(z^{'},\varphi_1(t_1+Lq))\leq q<r$ and $d(\varphi_1(t_1+nq),\varphi_1(t_1+(n+1)q))=q<r$ for every $0\leq n<L$, that is every terms of right hand side of the equation (\ref{isomorphism}) is less than $r$. Therefore for chosen $y^{'}\in Z$ and $z^{'}\in I_c^{y^{'}}$, it yields
		\begin{align*}
			D(z^{'},Fz^{'})&\leq H(Fy^{'},Fz^{'})\\
			&\leq H(Fy^{'},F(\varphi_1(t_1+q)))+\cdots+H(F(\varphi_1(t_1+Lq)),Fz^{'})\\
			&\leq  \alpha[d(y^{'},\varphi_1(t_1+q))+\cdots+d(\varphi_1(t_1+Lq),z^{'})]\\
			&\leq \alpha d(y^{'},z^{'}).\qquad[\text{using}~(\ref{isomorphism}) ]
		\end{align*}

	Next, we assume that the set $M$ is non-void and $m_0$ is the infimum of $M$. If possible, let $m_0=0$. Then we are able to find two sequences $\{d(y_n,z_n)\}_n$, $\{k(d(y_n,z_n))\}_n$ in order that $d(y_n,z_n)\to 0$ and $k(d(y_n,z_n))\to 1$, which contradicts to the criteria of the map $k\in W$. Hence $m_0\neq 0$. Then for a chosen positive real $q_0<m_0$, we have
	\begin{equation}\label{set1}
		\sup\{k(d(x_1,x_2)):d(x_1,x_2)\in [0,q_0]\}=\alpha_0<1.
	\end{equation} 
	Let $y^{'}\in Z$ be a fixed element. Subsequently, there is $z^{'}\in I_c^{y^{'}}$, where $c>\alpha_0$. Then for that $y^{'}$ and $z^{'}$, there is an isometry $\varphi_2:[s_1,s_2]\to Z$ in order that $\varphi_2(s_1)=y^{'}$ and $\varphi_2(s_2)=z^{'}$, where $[s_1,s_2]\subset \mathbb{R}$. Consequently, for certain fixed positive number $a<\min\{q_0,r\}$, we can find $L^{'}\in \mathbb{N}$ satisfying $s_1+L^{'}a<s_2$ and $s_2\leq s_1+ (L^{'}+1)a$.
	Evidently,
	\begin{equation}\label{isomorphism1}
		\begin{aligned}
			d(y^{'},z^{'})&=d(\varphi_2(s_1),\varphi_2(s_2))=|s_1-s_2|\nonumber\\
			&=|s_1-(s_1+a)|+|(s_1+a)-(s_1+2a)|+\cdots+|(s_1+L^{'}a)-s_2|\nonumber\\
			&=d(y^{'},\varphi_2(s_1+a))+d(\varphi_2(s_1+a),\varphi_2(s_1+2a))+\cdots+\nonumber\\
			&\qquad d(\varphi_2(s_1+L^{'}a),z^{'}).
		\end{aligned}
	\end{equation}
	Again, it is simple to check that $d(z^{'},\varphi_2(s_1+L^{'}a))\leq a$ and $d(\varphi_2(s_1+na),\varphi_2(s_1+(n+1)a))=a$ for every $0\leq n<L^{'}$, that is each terms of right hand side of the equation (\ref{isomorphism1}) is less than or equal to $a$. Therefore for that chosen $y^{'}\in Z$ and $z^{'}\in I_c^{y^{'}}$, it yields	
	\begin{align*}
		D(z^{'},Fz^{'})\leq& H(Fy^{'},Fz^{'})\\
		\leq&H(Fy^{'},F(\varphi_2(s_1+a)))+\cdots+H(F(\varphi_2(s_1+L^{'}a)),Fz^{'})\\
		\leq & \alpha_0[d(y^{'},\varphi_2(s_1+a))+\cdots+d(\varphi_2(s_1+L^{'}a),z^{'})]\\
		\leq&\alpha_0 d(y^{'},z^{'})\qquad[\text{using}~(\ref{isomorphism1})].
	\end{align*}
	Thus $F$ becomes a Feng-Liu contraction. 
\end{proof}

In the succeeding theorem, we establish the fixed points for Mizoguchi-Takahashi local contraction on a metrically convex space. Indeed, this result follows from our above mentioned Theorem \ref{main} and the Theorem \ref{feng_thm} due to Feng and Liu \cite{feng}. Although, this theorem is established by Sultana and Qin \cite[Corollary 3.1]{mam_equivalence} in the year 2019. 
\begin{theorem}\label{cor1}\cite{mam_equivalence}
	Suppose that $(Z,d)$ is metrically convex complete and a set-valued map $F:Z\to CB(Z)$ meets the equation $(\ref{mizo_local})$. Then presence of $z^*\in Z$ in order that $z^*\in Fz^*$ can be assured.
\end{theorem}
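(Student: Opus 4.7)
The plan is to obtain Theorem \ref{cor1} as a direct consequence of Theorem \ref{main} combined with the Feng--Liu fixed point theorem (Theorem \ref{feng_thm}). Concretely, once we know that every uniformly local Mizoguchi--Takahashi contraction on a metrically convex complete space is a Feng--Liu contraction, the only remaining hypothesis of Theorem \ref{feng_thm} that needs checking is the lower semicontinuity of the map $h:Z\to \mathbb{R}$ given by $h(z)=D(z,Fz)$.

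First I would invoke Theorem \ref{main}: since $(Z,d)$ is metrically convex complete and $F:Z\to CB(Z)$ satisfies (\ref{mizo_local}), there exist constants $0\leq \alpha<c<1$ such that for every $y\in Z$ one can pick $z\in I_c^{y}=\{x\in Fy:c\, d(x,y)\leq D(y,Fy)\}$ with $D(z,Fz)\leq \alpha\, d(y,z)$. This is precisely the defining condition of a Feng--Liu contraction.

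Next I would verify that $h(z)=D(z,Fz)$ is lower semicontinuous, which is the continuity requirement needed by Theorem \ref{feng_thm}. For any $y,z\in Z$ with $d(y,z)<r$, the triangle-type inequality for the distance to a set gives
\begin{equation*}
\bigl|D(y,Fy)-D(z,Fz)\bigr|\leq d(y,z)+H(Fy,Fz)\leq \bigl(1+k(d(y,z))\bigr)d(y,z)\leq 2\,d(y,z),
\end{equation*}
using (\ref{mizo_local}) and the fact that $k\in W$ takes values in $[0,1)$. Thus $h$ is locally Lipschitz on $Z$ and hence continuous; in particular it is lower semicontinuous.

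Finally, applying Theorem \ref{feng_thm} to the Feng--Liu contraction $F$ produced in the first step yields an element $z^{*}\in Z$ with $z^{*}\in Fz^{*}$, which is the desired conclusion. I do not anticipate any real obstacle here: Theorem \ref{main} does all the substantive work of converting the metric convexity and the chainability-free hypothesis into a Feng--Liu-type selection, and the lower semicontinuity of $h$ is an easy byproduct of the MT-local-contraction inequality itself.
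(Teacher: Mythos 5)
Your proposal is correct and follows essentially the same route as the paper: invoke Theorem \ref{main} to conclude that $F$ is a Feng--Liu contraction, verify lower semicontinuity of $h(z)=D(z,Fz)$ via the triangle inequality for distance-to-a-set combined with (\ref{mizo_local}), and then apply Theorem \ref{feng_thm}. The only cosmetic difference is that you establish the two-sided local Lipschitz bound $|h(y)-h(z)|\leq 2\,d(y,z)$, whereas the paper proves only the one-sided $\liminf$ inequality needed for lower semicontinuity; both rest on the same estimate.
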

\begin{proof}
	Since $F$ meets the equation $(\ref{mizo_local})$ and $Z$ is metrically convex, then by Theorem \ref{main}, the map $F$ fulfils the Feng-Liu contraction. As a consequence, for each $y\in Z$, $z\in I_c^y$ where $0\leq \alpha<c<1$ fulfils
	\begin{eqnarray*}
		D(z,Fz)\leq \alpha d(y,z).
	\end{eqnarray*}
	Now our aim is to establish that $h:Z\to \mathbb{R}$ by $h(z)=D(z,Fz)$ is lower semicontinuous. Let us assume $\{q_n\}_n\in Z$ in order that $q_n\to q$. Then we achieve $l\in \mathbb{N}$ in order that $d(q_n,q)<r$, for each $n\geq l$. Now for every $n\geq 1$ and $z\in Fq_n$,
	\begin{eqnarray}
		D(q,Fq)&\leq&d(q,q_n)+d(q_n,z)+D(z,Fq)\nonumber\\
		&\leq&d(q,q_n)+d(q_n,z)+H(Fq_n,Fq)\nonumber\\
		&\leq&d(q,q_n)+D(q_n,Fq_n)+H(Fq_n,Fq).
	\end{eqnarray}
	As for all $n\geq l$, $d(q_n,q)<r$, then the last inequation leads to 
	\begin{eqnarray*}
		D(q,Fq)&\leq& d(q,q_n)+D(q_n,Fq_n)+k(d(q_n,q))d(q_n,q)\\
		&< & d(q,q_n)+D(q_n,Fq_n)+d(q_n,q).
	\end{eqnarray*}
	Taking $n\to \infty$, we conclude that $D(q,Fq)\leq\liminf_{n\to \infty}D(q_n,Fq_n)$. Now applying the Theorem \ref{feng_thm}, we are able to find $z^*\in Z$ in order that $z^*\in Fz^*$.
\end{proof}

In 2009, Eldred et. al \cite[Theorem 2.4]{eldred} also established that Mizoguchi-Takahashi contraction \cite{mizo} follows Nadler's contraction \cite{nadler}, whenever the metric space is compact. In the below stated theorem, we establish the equivalence between the Mizoguchi-Takahashi local contraction \cite{mam} and the Nadler local contraction \cite{nadler} in a metric space which is compact.
\begin{theorem}
	Let $(Z,d)$ be compact and a set-valued map $F:Z\to CB(Z)$ meets the equation $(\ref{mizo_local})$. Then for each $y,z\in Z$ having $d(y,z)<r$ (where $r>0$), 
	\begin{equation*}
		H(Fy,Fz)\leq \alpha\, d(y,z)\quad \text{where}~~0
		\leq\alpha< 1.
	\end{equation*}
\end{theorem}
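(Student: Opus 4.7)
The plan is to argue by contradiction: if the desired uniform constant $\alpha<1$ does not exist, then for each $n$ one can choose $y_n, z_n \in Z$ with $0 < t_n := d(y_n, z_n) < r$ and $H(Fy_n, Fz_n)/t_n > 1 - 1/n$. The hypothesis (\ref{mizo_local}) then forces $k(t_n)\to 1$. Compactness of $Z$ lets us pass to a subsequence with $y_n \to y_0$, $z_n \to z_0$, and $t_n \to t_0 \in [0, r]$.

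The case $t_0 = 0$ is handled at once: $t_n \to 0^+$ together with $k(t_n)\to 1$ contradicts the property $\limsup_{t\to 0^+} k(t)<1$ inherited from $k \in W$. When $t_0 > 0$, the hypothesis (\ref{mizo_local}) with $k<1$ pointwise makes $F$ locally $1$-Lipschitz in the Hausdorff metric on pairs at distance less than $r$; hence $H(Fy_n,Fy_0)\to 0$ and $H(Fz_n,Fz_0)\to 0$, and the triangle inequality for $H$ gives $H(Fy_n,Fz_n) \to H(Fy_0,Fz_0)$. Combined with $H(Fy_n,Fz_n)/t_n\to 1$ and $t_n\to t_0$, this forces $H(Fy_0,Fz_0) = t_0$. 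If $t_0 < r$, I can apply (\ref{mizo_local}) directly at $(y_0, z_0)$ to obtain $H(Fy_0, Fz_0)\le k(t_0) t_0 < t_0$, contradicting the previous equality.

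The main obstacle is the boundary subcase $t_0 = r$, in which (\ref{mizo_local}) cannot be invoked at the limit pair $(y_0, z_0)$ and the $W$-condition, controlling only right limits of $k$, does not by itself preclude $k(t_n)\to 1$ as $t_n\to r^-$. My plan is to exploit the equality $H(Fy_0,Fz_0) = d(y_0,z_0) = r$ together with compactness of the sets $Fy_0, Fz_0\in CB(Z)$ in the compact ambient space to obtain extremizers $u_0 \in Fy_0$, $v_0 \in Fz_0$ with $d(u_0, v_0) = r$, lift them via the Hausdorff convergences $Fy_n \to Fy_0$ and $Fz_n \to Fz_0$ to sequences $u_n \in Fy_n, v_n \in Fz_n$ with $u_n \to u_0$ and $v_n \to v_0$, and then combine the inequality $H(Fy_n, Fz_n) \le k(t_n) t_n$ with triangle estimates relating $d(u_n, v_n)$ to $t_n$ and to the small quantities $d(y_n, y_0)$, $d(z_n, z_0)$ in order to extract a strict gap contradicting $H(Fy_n, Fz_n)/t_n \to 1$. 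Making this boundary argument precise, so that the weak right-limit control offered by $W$ is enough to rule out accumulation at $t_0 = r$, will be the hard step.
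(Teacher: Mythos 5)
Your cases $t_0=0$ and $0<t_0<r$ are correct and coincide in substance with the paper's argument; the paper sidesteps your boundary case by fixing $q<r$ and taking the supremum of $H(Fy,Fz)/d(y,z)$ only over pairs with $0<d(y,z)\le q$, so that the limit pair automatically satisfies $d(y^*,z^*)\le q<r$ and (\ref{mizo_local}) can be invoked at the limit. The gap in your proposal is exactly the step you flag: the subcase $t_0=r$ is not proved, only a plan is sketched, and the plan as described does not close. After you extract $u_n\in Fy_n$, $v_n\in Fz_n$ converging to extremizers with $d(u_0,v_0)=H(Fy_0,Fz_0)=r$, the only inequalities at your disposal are of the form $r=H(Fy_0,Fz_0)\le k(a_n)\,a_n+k(t_n)\,t_n+k(c_n)\,c_n$ with $a_n=d(y_0,y_n)$, $c_n=d(z_n,z_0)$, $t_n=d(y_n,z_n)$; since the chain $y_0,y_n,z_n,z_0$ may have total length $a_n+t_n+c_n$ strictly greater than $r$ (the space is not assumed metrically convex), the strict contraction of each link does not force the right-hand side below $r$, and the condition $k\in W$ gives no control on $k(t)$ as $t\to r^-$. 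Hence no contradiction is extracted and the boundary case remains open in your write-up.

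It is worth noting that the paper's own proof does not resolve this point either: having shown that $m=\sup\{H(Fy,Fz)/d(y,z):0<d(y,z)\le q\}=\alpha<1$ for one fixed $q<r$, it concludes in the final display that $H(Fy,Fz)\le k(d(y,z))d(y,z)\le\alpha\, d(y,z)$ for all $d(y,z)<r$; but the second inequality is unjustified for pairs with $q<d(y,z)<r$, since $\alpha$ bounds a supremum of ratios over distances in $(0,q]$ rather than the values of $k$ on $(q,r)$, and nothing in the argument prevents $m(q)\to 1$ as $q\to r^-$. So you have correctly isolated the genuinely hard point of the statement; neither your proposal nor the paper's proof settles it, and as written your attempt (like the paper's) establishes only the weaker conclusion that for every $q<r$ there is $\alpha_q<1$ with $H(Fy,Fz)\le\alpha_q\, d(y,z)$ whenever $d(y,z)\le q$.
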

\begin{proof}
	Let $0<q<r$ be a fixed real number. Now consider a real number $m$, which is the supremum of $\frac{H(Fy,Fz)}{d(y,z)}$, where the elements $y$ and $z$ lies in $Z$ having $0< d(y,z)\leq q$.
	We want to show that $m<1$. If possible let $m=1$, then there is $\{(y_n,z_n)\}_n\in Z\times Z$ having $d(y_n,z_n)\in (0,q]$ fulfils 
	\begin{equation}\label{limit}
		\lim_{n\to\infty}\frac{H(Fy_n,Fz_n)}{d(y_n,z_n)}=1.
	\end{equation}
	On the account of $d(y_n,z_n)\leq q<r$, we have $\frac{H(Fy_n,Fz_n)}{d(y_n,z_n)}\leq k(d(y_n,z_n))<1$. Taking $n\to \infty$, it yields that $k(d(y_n,z_n))\to 1$. If $d(y_n,z_n)$ converges to $0$, then it contradicts the criteria of $k\in W$. Hence we assert that $\{d(y_n,z_n)\}_n$ converges to a positive real number.
	
	As $\{(y_n,z_n)\}_n\in Z\times Z$ and $Z$ is compact, then there is $\{(y_{n_l},z_{n_l})\}_{l\in \mathbb{N}}$ in order that $(y_{n_l},z_{n_l})\to (y^*,z^*)$. Consequently, $d(y_{n_l},z_{n_l})\to d(y^*,z^*)$. Since $d(y_n,z_n)\in (0,q]$, then $d(y^*,z^*)\in (0,q]$, that is $d(y^*,z^*)\leq q<r$. Now
	\begin{eqnarray*}
		\lim_{l\to\infty}\frac{H(Fy_{n_l},Fz_{n_l})}{d(y_{n_l},z_{n_l})}&=&\frac{H(Fy^*,Fz^*)}{d(y^*,z^*)}\\
		&\leq & \frac{k(d(y^*,z^*))d(y^*,z^*)}{d(y^*,z^*)}\qquad [\because~d(y^*,z^*)<r]\\
		&=&k(d(y^*,z^*))<1.
	\end{eqnarray*}
	This contradicts the equation $(\ref{limit})$. Thus we can conclude that $m<1$, that is there is an non-negative $\alpha$ in order that $m=\alpha<1$. Therefore for each $y,z\in Z$ having $d(y,z)<r$, we have
	\begin{equation*}
		H(Fy,Fz)\leq k(d(y,z))d(y,z)\leq \alpha d(y,z)\quad\text{where}~\alpha\in [0,1).
	\end{equation*}
\end{proof}

Many mathematicians are interested to deal the invariant best approximation problems through fixed point, see (\cite{thagafi,pathak}). In \cite{kamran1,kumam}, the authors established invariant best approximation theorems for set-valued mappings. In the following, using the Theorem \ref{feng_thm} due to Feng-Liu \cite{feng} and our main Theorem \ref{main}, we derive a invariant best approximation theorem for a set-valued mapping which meets the criteria $(\ref{mizo_local})$ on a metrically convex space. Let $P$ be a non-void subset of a normed linear space $(Z,\|.\|)$ and $p\in Z$. Then a collection of elements $y\in P$ in order that $D(p,P)=\|y-p\|$ is described as $B_P(p)$ and it is stated as a best $P$-approximates of $p$ over $P$.

\begin{theorem}
	Let $P$ be a subset of a metrically convex normed linear space $(Z,\|.\|)$ and a set-valued map $F:P\to CB(P)$ in order that for every $y,z\in P$ having $d(y,z)<r$ fulfils $(\ref{mizo_local})$. Suppose that the below stated criteria hold:
	\begin{enumerate}
		\item [(i)] the set $B_P(p)$ is a complete subset of $P$,
		\item [(ii)] for every $x\in B_P(p)$, $\sup_{y\in Fx}\|y-p\| \leq \|x-p\|$.
	\end{enumerate}
	Then there is $z^*\in B_P(p)\cap \{z\in Z: z\in Fz\}$ if the function $h:B_P(p)\to \mathbb{R}$ is lower semicontinuous.
\end{theorem}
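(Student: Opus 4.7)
The plan is to restrict $F$ to the best-approximation set $B_P(p)$ and then apply the machinery of Theorem~\ref{cor1} (equivalently, Theorem~\ref{main} followed by Theorem~\ref{feng_thm}) to the restricted map.

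First, I would use hypothesis (ii) to verify that $F$ leaves $B_P(p)$ invariant. Take $x\in B_P(p)$ and $y\in Fx$. Since $Fx\subseteq P$, we have $\|y-p\|\geq D(p,P)$, while (ii) combined with $\|x-p\|=D(p,P)$ yields $\|y-p\|\leq D(p,P)$. Hence $\|y-p\|=D(p,P)$, so $y\in B_P(p)$, and thus $Fx\subseteq B_P(p)$ for every $x\in B_P(p)$.

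Next, I would pass to the restricted map $F|_{B_P(p)}\colon B_P(p)\to CB(B_P(p))$, which automatically inherits the Mizoguchi-Takahashi local contraction inequality~(\ref{mizo_local}) for pairs at distance less than $r$. By (i) this subspace is complete, and its metric convexity is inherited from the ambient normed linear space $Z$. Theorem~\ref{main} then yields that $F|_{B_P(p)}$ is a Feng-Liu contraction on $B_P(p)$: for every $y'\in B_P(p)$ there is $z'\in I_c^{y'}$ with $D(z',Fz')\leq\alpha\,d(y',z')$ for some fixed $0\leq\alpha<c<1$.

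Finally, since $h\colon B_P(p)\to\mathbb{R}$ defined by $h(z)=D(z,Fz)$ is assumed lower semicontinuous on the complete space $B_P(p)$, Theorem~\ref{feng_thm} produces $z^{*}\in B_P(p)$ with $z^{*}\in Fz^{*}$, giving the required element of $B_P(p)\cap\{z\in Z:z\in Fz\}$. I expect the main technical point to be confirming the metric convexity of $B_P(p)$ as a metric subspace of $Z$, since Lemma~\ref{metrically} (invoked inside the proof of Theorem~\ref{main}) needs the isometric copies of intervals to lie inside the space on which $F$ acts; once this is in hand, the argument mirrors the proof of Theorem~\ref{cor1} verbatim.
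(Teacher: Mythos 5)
Your overall strategy (invariance of $B_P(p)$ under $F$ via (ii), restriction to $B_P(p)$, then Feng--Liu) matches the paper's, and your invariance argument is exactly the one given there. The gap is in the order in which you invoke Theorem~\ref{main}. You restrict first and then apply Theorem~\ref{main} to $F|_{B_P(p)}$, which requires the metric space $B_P(p)$ itself to be metrically convex and complete. You assert that this convexity ``is inherited from the ambient normed linear space $Z$,'' but metric convexity never passes to arbitrary subsets, and for this particular subset it genuinely fails: $B_P(p)$ is contained in the sphere $\{y\in Z:\|y-p\|=D(p,P)\}$, and if $w$ is metrically between two points $y,z$ of that sphere, the identity $\|y-z\|=\|y-w\|+\|w-z\|$ forces $w$ onto the line segment joining them whenever the norm is strictly convex, whence $\|w-p\|<D(p,P)$ and $w\notin B_P(p)$. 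You flag confirming this convexity as ``the main technical point,'' but it is not a point that can be confirmed; it is false in general, so the proof as proposed breaks at exactly that step.

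The paper avoids this by reversing the order: it applies Theorem~\ref{main} \emph{before} restricting, using the metric convexity of the ambient space to produce, for each $y\in P$ (in particular each $y\in B_P(p)$), a point $z\in I_c^{y}$ with $D(z,Fz)\leq\alpha\, d(y,z)$; only afterwards does it use the invariance $F(B_P(p))\subseteq B_P(p)$ to note that any such $z$ automatically lies in $B_P(p)$ (since $z\in Fy$), so that $F|_{B_P(p)}$ satisfies the hypotheses of Theorem~\ref{feng_thm} on the complete space $B_P(p)$. Your underlying worry --- that the isometric copies of intervals produced by Lemma~\ref{metrically} must lie in the domain of $F$ --- is a legitimate one, but it concerns the domain $P$ versus $Z$ and is already implicit in the paper's own reading of Theorem~\ref{main}; making it the responsibility of $B_P(p)$ turns a loose point into a fatal one. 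If you rearrange your argument in the paper's order, the rest of your proposal goes through unchanged.
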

\begin{proof}
	As $Z$ is metrically convex and the set-valued map $F:P\to CB(P)$ fulfils $(\ref{mizo_local})$ for each $y,z\in P$ having $d(y,z)<r$, then by the Theorem \ref{main}, for each $y\in B_P(p)\subseteq P$, $z\in I_c^y$ meets $D(z,Fz)\leq \alpha d(y,z)$, where $0\leq \alpha<c<1$. Now let $x\in B_P(p)$ and $z\in Fx$. As $x\in B_P(p)$, then $D(p,P)=\|p-x\|$. Again from $(ii)$ we have
	\begin{equation*}
		\|z-p\|\leq \sup_{y\in Fx}\|y-p\| \leq \|x-p\|=D(p,P). 
	\end{equation*}  
	Thus we can conclude that $\|z-p\|=D(p,P)$. Therefore $z\in B_P(p)$. Hence $Fx\subseteq  B_P(p)$ for each $x\in B_P(p)$, that is $F( B_P(p))\subseteq  B_P(p)$. Immediately it observe that $Fx$ are also closed for any $x\in B_P(p)$, on the account of $Fx\subset CL(P)$, for each $x\in P$. Consequently $F|_{ B_P(p)}: B_P(p)\to CL( B_P(p))$. Thus $F$ fulfils all the criteria of the Theorem \ref{feng_thm} on the set $B_P(p)$ and hence
	$$w^*\in \{z\in B_P(p):z\in F|_{ B_P(p)}z\}= \{z\in Z: z\in Fz\}\cap B_P(p).$$
\end{proof}

%

The succeeding example illustrates that any uniformly local contraction (\ref{mizo_local}) due to Sultana and Vetrivel \cite{mam} does not follow  Feng-Liu contraction \cite{feng} on any arbitrary metric spaces.  

\begin{example}
	Consider $Z=[0,\frac{1}{2}]\cup \{n\in \mathbb{N}: n \geq 2\}$ and $d:Z\times Z\to \mathbb{R}$ is defined by
	\begin{center}
		$d(y,z)=
		\begin{cases}
			~0 &\mbox{if}~y=z,\\
			~|y-z| &\mbox{if}~0\leq y,z\leq \frac{1}{2},\\
			~y+z &\mbox{if one of}~ y,z\notin[0,\frac{1}{2}].
		\end{cases}$
	\end{center}
	Indeed $(Z,d)$ is not a metrically convex metric space. Now define a map $F:Z\to CB(Z)\subseteq CL(Z) $ in order that 	
	\begin{center}
		$Fz=
		\begin{cases}
			~\left\{\frac{1}{2}z^2 \right\} &\mbox{if}~z\in[0,\frac{1}{2}],\\
			~\{7,2z-1\} &\mbox{if}~ z\in\{2,3,4,\cdots\}.
		\end{cases}$
	\end{center}
	Our goal is to show that $F$ is not a Feng-Liu contraction but a Mizoguchi-Takahashi local contraction. Now for $3\in Z$, $F3=\{7,5\}$. Then there are two cases that arise.
	\begin{enumerate}
		\item [(i)] Choose $7\in \{7,5\}$, then $F7=\{7,13\}$. Now we observe that $D(7,F7)=\inf\{d(7,7),d(7,13)\}=\inf\{14,20\}=14$. Again $d(3,7)=10$. Therefore $$D(7,F7)> d(3,7).$$
		
		\item[(ii)] Choose $5\in \{7,5\}$, then $F5=\{7,9\}$. Now we see that $D(5,F5)=\inf\{d(5,7),d(5,9)\}=\inf\{12,14\}=12$. Again $d(3,5)=8$. Therefore $$D(5,F5)> d(3,5).$$
	\end{enumerate} 
	Hence for $y=3\in Z$ and every $z\in F3$, we obtain that 
	$$D(z,Fz)>d(y,z).$$
	This indicates that $F$ is not a Feng-Liu contraction. On the other side, when $d(y,z)<\frac{1}{2}$, then it is easy to visualize that $y,z\in [0,\frac{1}{2}]$. Now 
	\begin{eqnarray*}
		H(Fy,Fz)&=&H\left(\left\{\frac{1}{2}y^2\right\},\left\{\frac{1}{2}z^2\right\}\right)\\
		&\leq& \frac{1}{2}(y+z)d(y,z).
	\end{eqnarray*} 
	Consider a map $k:[0,\infty)\to [0,1)$ by
	\begin{center}
		$k(s)=
		\begin{cases}
			~\frac{3}{4 } &\mbox{if}~s\in[0,1/2)\\
			~0 &\mbox{if}~ s\in[1/2,\infty).
		\end{cases}$
	\end{center}
	Therefore $k\in W$ and hence for $d(y,z)<\frac{1}{2}$, 
	$$H(Fy,Fz)\leq k(d(y,z))d(y,z).$$
\end{example}

\section*{Acknowledgment}

The first author would like to acknowledge the Ministry of Human Resource Development, India for providing financial assistance during the research work.

\end{document}